\documentclass[12]{article}
\pagestyle{plain}
\usepackage{amsmath,amssymb,amsthm,color}
\usepackage{times,fancyhdr}
\usepackage{graphicx}
\usepackage{geometry}
\usepackage{titlesec}
\usepackage{cite}

\usepackage{amssymb}
\usepackage{amsmath}
\usepackage{amsfonts}
\usepackage{amsthm,amscd}
\usepackage{latexsym}
\usepackage{comment}

\setlength{\textwidth}{16.5cm}
\setlength{\textheight}{22cm}

\newtheorem{thm}{Theorem}[section]

\newtheorem{prop}{Proposition}[section]
\newtheorem{cor}{Corollary}[section]
\newtheorem{defn}{Definition}[section]

\def\R{{\mathfrak R}\, }
\def\M{{\mathfrak M}\, }

\def\G{{\mathfrak G}\, }
\def\Z{{\mathfrak Z}\, }
\def\ci{\begin{color}{red}\,}
\def\cf{\end{color}\,}

\begin{document}

\begin{center}{\bf \LARGE Multiplicative Maps on Generalized $n$-matrix Rings}\\
\vspace{.2in} 
 
{\bf Bruno L. M. Ferreira}\\
{\it Federal University of Technology,\\
Professora Laura Pacheco Bastos Avenue, 800,\\
85053-510, Guarapuava, Brazil.}\\
e-mail: brunoferreira@utfpr.edu.br\\
and\\
{\bf Aisha Jabeen}\\
{\it Department  of Applied Sciences \& Humanities,\\ Jamia Millia Islamia,\\ New Delhi-110025, India.}\\
e-mail: ajabeen329@gmail.com\\

\end{center}

\begin{abstract}
Let $\mathfrak{R}$ and $\mathfrak{R}'$ be two associative rings (not necessarily with the identity elements).  A bijective map $\varphi$ of $\mathfrak{R}$ onto  $\mathfrak{R}'$ is called a \textit{$m$-multiplicative isomorphism} if
{$\varphi (x_{1} \cdots x_{m}) = \varphi(x_{1}) \cdots \varphi(x_{m})$}
for all $x_{1}, \cdots ,x_{m}\in \mathfrak{R}.$
In this article, we establish a condition on generalized $n$-matrix rings, that assures that multiplicative maps are additive on generalized $n$-matrix rings under certain restrictions. And then, we apply our result for study of $m$-multiplicative isomorphism and  $m$-multiplicative derivation on generalized $n$-matrix rings.
\end{abstract}

\noindent {\bf 2010 Mathematics Subject Classification.}  16W99, 47B47, 47L35. \\
{\bf Keyword:} $m$-multiplicative maps, $m$-multiplicative derivations, generalized $n-$matrix rings, additivity.

\section{Introduction}
Let $\mathfrak{R}$ and $\mathfrak{R}'$ be two associative rings (not necessarily with the identity elements). We denote by $\mathfrak{Z}(\mathfrak{R})$ the center of $\mathfrak{R}.$ A bijective map $\varphi$ of $\mathfrak{R}$ onto  $\mathfrak{R}'$ is called a \textit{$m$-multiplicative isomorphism} if\\
\centerline{$\varphi (x_{1} \cdots x_{m}) = \varphi(x_{1}) \cdots \varphi(x_{m})$}\\
for all $x_{1}, \cdots ,x_{m}\in \mathfrak{R}.$ In particular, if $m = 2$ then $\varphi$ is called a \textit{multiplicative isomorphism}. Similarly, a map $d$ of $\mathfrak{R}$ is called a \textit{$m$-multiplicative derivation} if\\
\centerline{$d(x_{1} \cdots x_{m}) = \sum _{i=1}^{m} x_{1} \cdots d(x_{i}) \cdots x_{m}$}\\
for all $x_{1}, \cdots ,x_{m}\in \mathfrak{R}.$ If $d(xy)=d(x)y + xd(y)$ for all $x, y\in \mathfrak{R}$, we just say that $d$ is a {\it multiplicative derivation} of $\mathfrak{R}$.
\par
In last few decades, the multiplicative mappings on rings and algebras has been studied by many authors \cite{Mart, Wang, Lu02, LuXie06, ChengJing08, LiXiao11}. Martindale \cite{Mart} established a condition on a ring such that multiplicative bijective mappings on this ring are all additive. In particular, every multiplicative bijective mapping from a prime ring containing a nontrivial idempotent onto an arbitrary ring is additive. Lu \cite{Lu02} studied multiplicative isomorphisms of subalgebras of nest algebras which contain all finite rank operators but might contain no idempotents and proved that these multiplicative mappings are automatically additive and linear or conjugate linear. Further, Wang in \cite{Wangc, Wang} considered the additivity of multiplicative maps on rings with idempotents and triangular rings respectively. Recently, in order to generalize the result in \cite{Wang} first author \cite{Ferreira},  defined a class of ring called triangular
$n$-matrix ring and studied the additivity of multiplicative maps on that class of rings. In view of above discussed literature, in this article we discuss the additivity of multiplicative maps on a more general class of rings called generalized $n$-matrix rings.
\par
We adopt and follow the same structure of the article and demonstration presented in \cite{Ferreira}, in order to preserve the author ideas and to highlight the generalization of the triangular $n$-matrix results to the generalized $n$-matrix results.
\begin{defn}\label{pri}
Let $\R_1, \R_2, \cdots, \R_n$ be rings and $\M_{ij}$ $(\R_i, \R_j)$-bimodules with $\M_{ii} = \R_i$ for all $i, j \in \left\{1, \ldots, n\right\}$. Let $\varphi_{ijk}: \M_{ij} \otimes_{\R_j} \M_{jk} \longrightarrow \M_{ik}$ be $(\R_i, \R_k)$-bimodules homomorphisms with $\varphi_{iij}: \R_i \otimes_{\R_i} \M_{ij} \longrightarrow \M_{ij}$ and $\varphi_{ijj}: \M_{ij} \otimes_{\R_j} \R_j \longrightarrow \M_{ij}$ the canonical isomorphisms for all $i, j, k \in \left\{1, \ldots, n\right\}$. Write $a \circ b = \varphi_{ijk}(a \otimes b)$ for $a \in \M_{ij},$ $b \in \M_{jk}.$ We consider
\begin{enumerate}
\item[{\it (i)}] $\M_{ij}$ is faithful as a left $\R_i$-module and faithful as a right $\R_j$-module with $i\neq j,$
\item[{\it (ii)}] if $m_{ij} \in \M_{ij}$ is such that $\R_i m_{ij} \R_j = 0$ then $m_{ij} = 0$  with $i\neq j.$
\end{enumerate}
Let \begin{eqnarray*} \G = \left\{\left(
\begin{array}{cccc}
r_{11} & m_{12} & \ldots & m_{1n}\\
m_{21}& r_{22} & \ldots & m_{2n}\\
\vdots & \vdots  & \ddots & \vdots\\
m_{n1} & m_{n2}  & \ldots & r_{nn}\\
\end{array}
\right)_{n \times n}~ : ~\underbrace{ r_{ii} \in \R_{i} ~(= \M_{ii}), ~ m_{ij} \in \M_{ij}}_{(i, j \in \left\{1, \ldots, n\right\})}
\right\}\end{eqnarray*}
be the set of all $n \times n$ matrices $[m_{ij}]$ with the $(i, j)$-entry $m_{ij} \in \M_{ij}$ for all $i,j \in \left\{1, \ldots , n\right\}$. Observe that, with the obvious matrix operations of addition and multiplication, $\G$ is a ring iff $a \circ (b \circ c) = (a \circ b) \circ c$ for all $a \in \M_{ik}$, $b \in \M_{kl}$ and $c \in \M_{lj}$ for all $i, j, k, l \in \left\{1, \ldots, n\right\}$. When $\G$ is a ring, it is called a \textit{generalized $n-$matrix ring}.
\end{defn}
Note that if $n = 2,$ then we have the generalized matrix ring. We denote by $ \bigoplus^{n}_{i = 1} r_{ii}$ the element
$$\left(\begin{array}{cccc}
r_{11} &  &  & \\
 & r_{22} &  & \\
 &  & \ddots & \\
 &  &  & r_{nn}\\
\end{array}\right)$$
in $\G.$

\pagestyle{fancy}
\fancyhead{}
\fancyhead[EC]{B. L. M. Ferreira}
\fancyhead[EL,OR]{\thepage}
\fancyhead[OC]{Multiplicative Maps on Generalized $n$-matrix Rings}
\fancyfoot{}
\renewcommand\headrulewidth{0.5pt}

Set $\G_{ij}= \left\{\left(m_{kt}\right):~ m_{kt} = \left\{{ \begin{matrix} m_{ij}, & \textrm{if}~(k,t)=(i,j)\\ 0, & \textrm{if}~(k,t)\neq (i,j)\end{matrix}}, ~i, j \in \left\{1, \ldots, n\right\} \right\}.\right.$ Then we can write $\displaystyle  \G = \bigoplus_{ i, j \in \left\{1, \ldots , n\right\}}\G_{ij}.$ Henceforth the element $a_{ij}$ belongs $\G_{ij}$ and the corresponding elements are in $\R_1, \cdots, \R_n$ or $\M_{ij}.$ By a direct calculation $a_{ij}a_{kl} = 0$ if $j \neq k.$
We define natural projections $\pi_{\R_{i}} : \G \longrightarrow \R_{i}$ $(1\leq i\leq n)$ by  $$\left(\begin{array}{cccc}
r_{11} & m_{12} & \ldots & m_{1n}\\
m_{21} & r_{22} & \ldots & m_{2n}\\
 \vdots & \vdots & \ddots & \vdots\\
 m_{n1 }& m_{n2}  & \ddots & r_{nn}\\
\end{array}\right)\longmapsto r_{ii}.$$
The following result is a characterization of center of generalized $n$-matrix ring.
\begin{prop}\label{seg}
Let $\G$ be a generalized $n-$matrix ring. The center of $\G$ is \\
\centerline{$\mathfrak{Z}(\G) = \left\{ \bigoplus_{i=1}^{n} r_{ii} ~\Big|~ r_{ii}m_{ij} = m_{ij}r_{jj} \mbox{ for all }  m_{ij} \in \M_{ij}, ~i \neq j\right\}.$}\\
Furthermore, $\mathfrak{\Z}(\G)_{ii} \cong \pi_{\R_i}(\mathfrak{Z}(\G))\subseteq \mathfrak{\Z}(\R_i)$, and there exists a unique ring
isomorphism $\tau^j_{i}$ from $\pi_{\R_i}(\Z(\G))$ to $\pi_{\R_j}(\Z(\G))$ $i \neq j$ such that $r_{ii}m_{ij} = m_{ij}\tau^j_{i}(r_{ii})$ for all $m_{ij} \in \M_{ij}.$
\end{prop}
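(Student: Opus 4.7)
The plan is to proceed in two stages: first pin down the diagonal shape of central elements using the faithfulness and non-degeneracy hypotheses of Definition~\ref{pri}, and then read off the ring isomorphisms $\tau^j_i$ from the resulting commutation relations.

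For the first stage I would take an arbitrary $a = (a_{ij}) \in \Z(\G)$, fix indices $k,l$ and an element $m_{kl} \in \M_{kl}$, and let $b \in \G$ denote the matrix with single nonzero entry $m_{kl}$ in position $(k,l)$. Equating the entries of $ab$ and $ba$ produces three families of identities: the $(i,l)$-entry for $i \ne k$ gives $a_{ik} m_{kl} = 0$; the $(k,j)$-entry for $j \ne l$ gives $m_{kl} a_{lj} = 0$; and the $(k,l)$-entry gives $a_{kk} m_{kl} = m_{kl} a_{ll}$. Specializing the first identity to $l = k$ yields $a_{ik} \R_k = 0$ for $i \ne k$, so $\R_i a_{ik} \R_k = 0$ and condition~(ii) of Definition~\ref{pri} forces $a_{ik} = 0$; symmetrically the second identity at $k = l$ kills $a_{lj}$ for $l \ne j$. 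Thus every central element is diagonal, and the remaining identity is exactly $a_{ii} m_{ij} = m_{ij} a_{jj}$ for $i \ne j$. Commuting $a$ with an arbitrary diagonal element of $\G$ further shows $a_{ii} \in \Z(\R_i)$, which yields $\pi_{\R_i}(\Z(\G)) \subseteq \Z(\R_i)$ together with the identification $\Z(\G)_{ii} \cong \pi_{\R_i}(\Z(\G))$ coming from $\G_{ii} = \R_i$.

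For the reverse inclusion in the center characterization I would argue that if $a = \bigoplus r_{ii}$ satisfies $r_{ii} m_{ij} = m_{ij} r_{jj}$ for all $m_{ij}$ and all $i \ne j$, then applying this relation twice to the element $r'_{ii} m_{ij} \in \M_{ij}$ (for any $r'_{ii} \in \R_i$) gives $(r_{ii} r'_{ii} - r'_{ii} r_{ii}) m_{ij} = 0$, so left-faithfulness of $\M_{ij}$ (condition~(i)) forces $r_{ii} \in \Z(\R_i)$; a direct matrix computation via the decomposition $\G = \bigoplus \G_{ij}$ then shows $a$ commutes with every element of $\G$. For the last assertion, I would define $\tau^j_i(r_{ii}) = r_{jj}$ where $\bigoplus r_{kk}$ is any central lift of $r_{ii}$. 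Well-definedness, uniqueness, and identification of $\tau^j_i$ by the relation $r_{ii} m_{ij} = m_{ij} \tau^j_i(r_{ii})$ all reduce to right-faithfulness of $\M_{ij}$: if $m_{ij} r = 0$ for every $m_{ij}$, then $r = 0$. Additivity and multiplicativity of $\tau^j_i$ are immediate because the diagonal central elements form a subring of $\bigoplus_k \R_k$, and bijectivity is symmetric since $\tau^i_j$ is a two-sided inverse.

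The main technical obstacle is the absence of identities in the $\R_i$, which precludes the usual idempotent-based trick. Conditions~(i) and~(ii) are precisely what bypasses this: condition~(ii) eliminates off-diagonal entries of central elements from the weak vanishing $\R_i a_{ij} \R_j = 0$, while condition~(i) simultaneously delivers the converse implication $r_{ii} \in \Z(\R_i)$ and the well-definedness, uniqueness, and invertibility of $\tau^j_i$.
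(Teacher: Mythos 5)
Your proposal is correct and follows essentially the same route as the paper: off-diagonal entries of a central element are killed by reducing to $\R_i a_{ik}\R_k=0$ and invoking condition (ii) of Definition~\ref{pri}, the intertwining relations $a_{ii}m_{ij}=m_{ij}a_{jj}$ come from commuting with single-entry matrices, the inclusion of the diagonal entries in $\Z(\R_i)$ is deduced from left-faithfulness, and $\tau^j_i$ is defined through central lifts with right-faithfulness supplying well-definedness and uniqueness. The only (harmless) divergences are that you treat all indices uniformly where the paper separately handles $i=n$ via right-faithfulness of $\M_{in}$ (a vestige of the triangular case that is indeed unnecessary here, since every $\M_{ij}$ with $i\neq j$ is left-faithful), and that you obtain additivity and multiplicativity of $\tau^j_i$ structurally, from the projections of $\Z(\G)$ being injective ring homomorphisms, rather than by the paper's element-wise computation with $m_{ij}$.
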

\begin{proof} Let $S = \left\{ \bigoplus_{i=1}^{n} r_{ii} ~\Big|~ r_{ii}m_{ij} = m_{ij}r_{jj} \mbox{ for all }  m_{ij} \in \M_{ij}, ~i \neq j\right\}.$ By a direct calculation we have that if $r_{ii} \in \Z(\R_i)$ and $r_{ii}m_{ij} = m_{ij}r_{jj}$ for every $m_{ij} \in \M_{ij}$ for all $ i \neq j $, then $ \bigoplus_{i=1}^{n} r_{ii} \in \Z(\G)$; that is, $ \left( \bigoplus_{i=1}^{n} \Z(\R_i) \right)\cap S \subseteq \Z(\G).$ To prove that $S = \Z(\G),$ we must show that $\Z(\G) \subseteq S$ and $S \subseteq  \bigoplus_{i=1}^{n} \Z(\R_i).$\\
Suppose that
$x = \left(\begin{array}{cccc}
r_{11} & m_{12} & \ldots & m_{1n}\\
m_{21} & r_{22} & \ldots & m_{2n}\\
 \vdots& \vdots & \ddots & \vdots\\
m_{n1} & m_{n2} & \ddots & r_{nn}\\
\end{array}\right) \in \Z(\G).$
Since $x\big( \bigoplus_{i=1}^{n} a_{ii}\big) = \big( \bigoplus_{i=1}^{n} a_{ii}\big)x$ for all $a_{ii} \in \R_{i},$  we have $a_{ii}m_{ij} = m_{ij}a_{jj}$ for $i \neq j$. Making $a_{jj} = 0$ we conclude $a_{ii}m_{ij} = 0$ for all $a_{ii} \in \R_{i}$  and so $m_{ij} = 0$ for all $i \neq j$ which implies that $x= \bigoplus_{i=1}^{n} r_{ii}$. Moreover, for any $m_{ij} \in \M_{ij}$ as
 $$x \left(\begin{array}{cccccccc}
0  & \ldots & 0 & \ldots & 0 & \cdots & 0\\
 \vdots & \ddots & \vdots & & \vdots & & \vdots\\
 0 & \ldots & 0 & \ldots & m_{ij}& \ldots & 0\\
\vdots & &\vdots & \ddots & \vdots & & \vdots\\
0 &\ldots & 0&\ldots & 0 & \ldots & 0 \\
\vdots & &\vdots & & \vdots & \ddots & \vdots \\
0 & \ldots & 0 & \ldots & 0 & \ldots &  0
\end{array}\right)
=\left(\begin{array}{cccccccc}
0  & \ldots & 0 & \ldots & 0 & \cdots & 0\\
 \vdots & \ddots & \vdots & & \vdots & & \vdots\\
 0 & \ldots & 0 & \ldots & m_{ij}& \ldots & 0\\
\vdots & &\vdots & \ddots & \vdots & & \vdots\\
0 &\ldots & 0&\ldots & 0 & \ldots & 0 \\
\vdots & &\vdots & & \vdots & \ddots & \vdots \\
0 & \ldots & 0 & \ldots & 0 & \ldots &  0
\end{array}\right)x,$$
then $r_{ii}m_{ij} = m_{ij}r_{jj}$ for all $i \neq j$ which results in $\Z(\G) \subseteq S$. Now suppose $ x=\bigoplus_{i=1}^{n} r_{ii} \in S.$ Then for any $a_{ii} \in \R_i$ $(i=1, \cdots ,n-1),$ we have $(r_{ii}a_{ii} - a_{ii}r_{ii})m_{ij} = r_{ii}(a_{ii}m_{ij}) - a_{ii}(r_{ii}m_{ij}) = (a_{ii}m_{ij})r_{jj} - a_{ii}(m_{ij}r_{jj}) = 0$ for all $m_{ij} \in \M_{ij}$  $(i \neq j)$ and hence $r_{ii}a_{ii} - a_{ii}r_{ii} = 0$ as $\M_{ij}$ is left faithful $\R_i$-module.
Now for $i = n$ we have $m_{in}(r_{nn}a_{nn} - a_{nn}r_{nn}) = m_{in}(r_{nn}a_{nn}) - m_{in}(a_{nn}r_{nn}) =(m_{in}r_{nn})a_{nn} - (m_{in}a_{nn})r_{nn}= (r_{ii}m_{in})a_{nn} - r_{ii}(m_{in}a_{nn}) = 0$ and hence $r_{nn}a_{nn} - a_{nn}r_{nn} = 0$ as $\M_{in}$ is right faithful $\R_n$-module. Therefore $r_{ii} \in \Z(\R_i),$ $i = 1, \cdots, n$. Hence, $ S \subseteq \bigoplus_{i=1}^{n} \Z(\R_i).$
\par The fact that $\pi_{\R_i}(\Z(\G)) \subseteq \Z(\R_i)$ for $i = 1 , \cdots , n$ are direct consequences of $ \Z(\G) = S\subseteq \bigoplus_{i=1}^{n} \Z(\R_i).$ Now we prove the existence of the ring isomorphism $\tau^j_i : \pi_{\R_i}(\Z(\G)) \longrightarrow \pi_{\R_j}(\Z(\G))$ for $i \neq j$. For this, let us consider a pair of indices $(i, j)$ such that $ i \neq j$. For any $ r=\bigoplus_{k=1}^{n} r_{kk} \in \Z(\G)$ let us define  $\tau ^j_i(r_{ii})=r_{jj}$. The application is well defined because if $s= \bigoplus_{k=1}^{n} s_{kk} \in \Z(\G)$ is such that $s_{ii} = r_{ii}$, then we have $m_{ij}r_{jj} = r_{ii}m_{ij} = s_{ii}m_{ij}=m_{ij}s_{jj}$ for all $m_{ij} \in \M_{ij}$. Since $\M_{ij}$ is right faithful $\R_j$-module, we conclude that $r_{jj} = s_{jj}$. Therefore, for any $r_{ii} \in \pi_{\R_i}(\Z(\G)),$ there exists a unique $r_{jj} \in \pi_{\R_j}(\Z(\G)),$ denoted by $\tau ^j_i(r_{ii})$. It is easy to see that $\tau^j_i$ is bijective. Moreover, for any $r_{ii}, s
_{ii} \in \pi_{\R_i}(\Z(\G))$ we have
$m_{ij}\tau ^j_i(r_{ii} + s_{ii})=(r_{ii} + s_{ii})m_{ij} =m_{ij}(r_{jj} + s_{jj})=m_{ij}\big(\tau^j_i(r_{ii}) + \tau^j_i(s_{ii})\big)$ and $m_{ij}\tau^j_i(r_{ii}s_{ii}) = (r_{ii}s_{ii})m_{ij} = r_{ii}(s_{ii}m_{ij}) = (s_{ii}m_{ij})\tau^j_i(r_{ii}) = s_{ii}\big(m_{ij}\tau^j_i(r_{ii})\big) = m_{ij}\big( \tau^j_i(r_{ii})\tau^j_i(s_{ii})\big)$.
Thus $\tau^j_i(r_{ii} + s_{ii}) = \tau^j_i(r_{ii}) + \tau^j_i(s_{ii})$ and $\tau^j_i(r_{ii}s_{ii}) = \tau^j_i(r_{ii})\tau^j_i (s_{ii})$ and so $\tau^j_i$ is a ring isomorphism.
 \end{proof}

\begin{prop}\label{ter}
Let $\G$ be a generalized $n-$matrix ring and $ i \neq j$ such that:
\begin{enumerate}
	\item[\it (i)] $a_{ii}\R_i = 0$ implies $a_{ii} = 0$ for $a_{ii} \in \R_i$;
	\item[\it (ii)] $\R_j b_{jj} = 0$ implies $b_{jj} = 0$ for all $b_{jj} \in \R_j$.
\end{enumerate}
Then $u \G = 0$ or $\G u = 0$ implies $u =0$ for $u \in \G$.
\end{prop}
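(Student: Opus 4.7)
The plan is to decompose $u = \sum_{k,\ell} u_{k\ell}$ with $u_{k\ell} \in \G_{k\ell}$ and then test the annihilation hypothesis against single-entry matrices $a_{pq} \in \G_{pq}$; each such product will produce a relation of the form $u_{kp} \cdot \M_{pq} = 0$, and by choosing the pair $(p,q)$ appropriately I will kill each entry of $u$ using either Definition 1.1 or one of the two hypotheses of the proposition.

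Suppose first that $u\G = 0$. For any $p, q$ and any $a_{pq} \in \M_{pq}$ embedded in $\G_{pq}$, the product $u \cdot a_{pq}$ has only column $q$ possibly nonzero, with $(k, q)$-entry $u_{kp} \cdot a_{pq}$, so the hypothesis yields the master relation $u_{kp} \cdot \M_{pq} = 0$ for every $k, p, q$. Specializing $p = q = i$ and $k = i$ gives $u_{ii} \R_i = 0$, which by hypothesis (i) forces $u_{ii} = 0$. For $k \neq i$ (still $p = q = i$), the element $u_{ki} \in \M_{ki}$ satisfies $\R_k u_{ki} \R_i = 0$, so Definition 1.1(ii) yields $u_{ki} = 0$. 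The remaining off-diagonals $u_{k\ell}$ with $k \neq \ell$ and $\ell \neq i$ fall the same way by specializing to $p = q = \ell$ and invoking Definition 1.1(ii) again. For the remaining diagonals $u_{\ell\ell}$ with $\ell \neq i$, I pick $p = \ell$ and some $q \neq \ell$: then $u_{\ell\ell} \cdot \M_{\ell q} = 0$, and the left-faithfulness of $\M_{\ell q}$ as an $\R_\ell$-module gives $u_{\ell\ell} = 0$.

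The case $\G u = 0$ is dispatched by the mirror argument: left multiplication by $a_{pq} \in \G_{pq}$ isolates row $p$ and yields $\M_{pq} \cdot u_{q\ell} = 0$ for every $p, q, \ell$. Taking $p = q = j$ and $\ell = j$ reduces to $\R_j u_{jj} = 0$, which is exactly hypothesis (ii) and gives $u_{jj} = 0$; for $\ell \neq j$ the element $u_{j\ell}$ satisfies $\R_j u_{j\ell} \R_\ell = 0$ and vanishes by Definition 1.1(ii); and the remaining rows are handled by the right-faithfulness of the bimodules $\M_{p\ell}$ over $\R_\ell$ together with Definition 1.1(ii). The argument is almost pure bookkeeping, so the only real obstacle is making sure every entry type is dispatched; the key observation is that Definition 1.1's faithfulness conditions kill every entry except the diagonal entries at one distinguished index per case, which is precisely what hypotheses (i) and (ii) are placed there to handle.
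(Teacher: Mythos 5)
Your proof is correct and follows essentially the same route as the paper: decompose $u$ into its components $u_{k\ell}\in\G_{k\ell}$, read off the componentwise annihilation relations from $u\G=0$ (resp.\ $\G u=0$), and kill each component using the faithfulness conditions of Definition~\ref{pri} together with hypotheses \textit{(i)} and \textit{(ii)}. If anything your bookkeeping is slightly more careful than the paper's own: you invoke hypothesis \textit{(i)} only at the distinguished index $i$ (resp.\ \textit{(ii)} only at $j$) and dispatch the remaining diagonal entries via left (resp.\ right) faithfulness of the off-diagonal bimodules, whereas the paper's proof applies condition \textit{(i)} at every index $k<n$, which is more than the hypothesis as stated literally provides.
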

\begin{proof} First, let us observe that if $i \neq j$ and $\R_i a_{ii} = 0,$ then we have $\R_i a_{ii}m_{ij}\R_{j} = 0$, for all $m_{ij} \in  \M_{ij}$, which implies $a_{ii}m_{ij} = 0$ by condition {\it (ii)} of the Definition \ref{pri}. It follows that $a_{ii}\M_{ij} = 0$ resulting in $a_{ii} = 0$. Hence, suppose $ u = \bigoplus_{i, j \in \left\{1, \ldots, n \right\}} u_{ij}$, with $u_{ij} \in \G_{ij}$, satisfying $u\G = 0$. Then $u_{kk}\R_k = 0$ which yields $u_{kk} = 0$ for $k = 1, \cdots, n-1$, by condition {\it (i)}. Now for $k = n$, $u_{nn}\R_n = 0,$ we have $\R_{i}m_{in}u_{nn}\R_{n}= 0$, for all $m_{in} \in  \M_{in}$, which implies $m_{in}u_{nn} = 0$ by condition {\it (ii)} of the Definition \ref{pri}. It follows that $\M_{in}u_{nn} = 0$ which implies $u_{nn} = 0$. Thus $u_{ij}\R_j = 0$ and then $u_{ij} = 0$ by condition {\it (ii)}  of the Definition \ref{pri}. Therefore $u = 0$. Similarly, we prove that if $\G u = 0$ then $u=0$.
\end{proof}

\section{The Main Theorem}

Follows our main result which has the purpose of generalizing Theorem $2.1$ in \cite{Ferreira}.
Our main result reads as follows.

\begin{thm}\label{t11} Let $B : \G \times \G \longrightarrow \G$ be a biadditive map such that:
\begin{enumerate}
\item[{\it (i)}] $B(\G_{pp},\G_{qq})\subseteq \G_{pp}\cap \G_{qq}$; $B(\G_{pp},\G_{rs})\in \G_{rs}$ and $B(\G_{rs},\G_{pp})\in \G_{rs}$;
 $B(\G_{pq},\G_{rs})=0$;
\item[{\it (ii)}] if $B(\bigoplus_{1\leq p\neq q\leq n} c_{pq}, \G_{nn}) = 0$ or $B(\bigoplus _{1\leq r<n} \G_{rr},\bigoplus_{1\leq p\neq q\leq n} c_{pq}) = 0$, then $\bigoplus_{1\leq p\neq q\leq n} c_{pq} = 0$;
\item[{\it (iii)}] $B(\G_{nn}, a_{nn}) = 0$ implies $a_{nn} = 0$;
\item[{\it (iv)}] if $B(\bigoplus_{p=1}^{n} c_{pp},\G_{rs}) = B(\G_{rs},\bigoplus_{p=1}^{n} c_{pp}) = 0$ for all $1\leq r\neq s\leq n$, then $\bigoplus_{p=1}^{n-1} c_{pp} \oplus (-c_{nn}) \in \Z(\G)$;
\item[{\it (v)}] $B(c_{pp},d_{pp}) = B(d_{pp},c_{pp})$ and $B(c_{pp},d_{pp})d_{pn}d_{nn} = d_{pp}d_{pn}B(c_{nn},d_{nn})$ for all $c=\bigoplus_{p=1}^{n} c_{pp} \in \Z(\G)$;
\item[{\it (vi)}] $B\big(c_{rr},B(c_{kl},c_{nn})\big) = B\big(B(c_{rr},c_{kl}), c_{nn}\big)$.
\end{enumerate}
Suppose $f: \G \times \G \longrightarrow \G$ a map satisfying the following conditions:
\begin{enumerate}
\item[\it (vii)] $f(\G,0) = f(0,\G) = 0$;
\item[\it (viii)] $B\big(f(x,y),z\big) = f\big(B(x,z),B(y,z)\big)$;
\item[\it (ix)] $B\big(z,f(x,y)\big) = f\big(B(z,x),B(z,y)\big)$
\end{enumerate}
for all $x,y,z \in \G$. Then $f = 0$.
\end{thm}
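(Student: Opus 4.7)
The plan is to show $f(x,y)=0$ for every $x,y\in\G$. Fix such $x,y$, set $w:=f(x,y)$, and write its Peirce decomposition $w=\sum_{k,l}w_{kl}$ with $w_{kl}\in\G_{kl}$. I intend to kill the components of $w$ in a carefully chosen order, exploiting the following basic mechanism: whenever an auxiliary $z$ can be arranged so that $B(x,z)=0$, conditions (viii) and (vii) together give $B(w,z)=0$; the dual statement holds via (ix). Condition (i) together with the Peirce decomposition dictates exactly which $z$'s produce such cancellations, and hence which identities on $w$ I can generate.

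The first step is to kill the off-diagonal components $w_{rs}$ (with $r\neq s$). The target is to verify the hypothesis of (ii), namely $B\bigl(\bigoplus_{p\neq q}w_{pq},\,a_{nn}\bigr)=0$ for every $a_{nn}\in\G_{nn}$ (or its dual). Substituting $z=a_{nn}$ into (viii) and splitting via (i) decomposes $B(w,a_{nn})$ into a diagonal piece $B(w_{nn},a_{nn})\in\G_{nn}$ and off-diagonal pieces $B(w_{rs},a_{nn})\in\G_{rs}$. By iterating (viii) and (ix) with further auxiliary $z$'s chosen inside each Peirce block, and by using the mechanism above to cancel the $B(x,\cdot),B(y,\cdot)$ contributions on the right-hand sides, I expect to force the off-diagonal components to vanish individually, whereupon (ii) applies and gives $w_{rs}=0$ for all $r\neq s$.

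Once $w=\sum_p w_{pp}$ is diagonal, the same mechanism applied with $z\in\G_{rs}$ off-diagonal should yield $B(w,\G_{rs})=B(\G_{rs},w)=0$ for every $r\neq s$. Condition (iv) then places $\bigoplus_{p<n}w_{pp}\oplus(-w_{nn})$ in $\Z(\G)$, and Proposition~\ref{seg} supplies the isomorphisms $\tau^n_p$ tying each $w_{pp}$ to $w_{nn}$. Finally, producing $B(\G_{nn},w_{nn})=0$ from (viii) (or (ix)) with $z\in\G_{nn}$ and invoking (iii) kills $w_{nn}$; the centrality obtained above, together with the identities (v) and the associativity constraint (vi), then forces each remaining $w_{pp}$ to be zero, yielding $w=0$.

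The main obstacle, in my view, is the off-diagonal annihilation. The right-hand sides of (viii) and (ix) contain $f$ evaluated at the generally nonzero elements $B(x,z),B(y,z)$, so (vii) does not apply naively; the technical heart of the argument will be iterating the equivariance identities and using the biadditivity of $B$ to cancel those ``noise'' contributions until only the desired $B(w_{rs},\cdot)$-terms survive, at which point (ii) can be invoked.
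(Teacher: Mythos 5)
Your plan fixes arbitrary $x,y$, sets $w=f(x,y)$, and tries to kill the Peirce components of $w$ directly. The step you yourself flag as ``the main obstacle'' is in fact a genuine gap that your proposal never closes: for arbitrary $x,y$ the right-hand sides of (viii) and (ix) are $f\bigl(B(x,z),B(y,z)\bigr)$ with both arguments generally nonzero, and there is no way to make these vanish by ``iterating the equivariance identities and using the biadditivity of $B$.'' Biadditivity of $B$ acts outside of $f$, while the whole point of the theorem is that $f$ measures a failure of additivity, so you cannot split or cancel anything \emph{inside} the arguments of $f$. Concretely, to start your first step you would need $B(w,a_{nn})=0$, i.e.\ $f\bigl(B(x,a_{nn}),B(y,a_{nn})\bigr)=0$, and for generic $x,y$ nothing you have established gives this.

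The paper resolves exactly this obstacle by an induction on the \emph{shape of the arguments} of $f$ rather than on the components of its value: it first proves $f\bigl(\sum_{i<n}x_{ii},\sum_{j\neq k}x_{jk}\bigr)=0$ (here $z=a_{nn}$ kills the first argument via (vii)), then $f$ on pairs of purely off-diagonal elements, then on pairs missing the $(n,n)$-diagonal block, and only then treats general $x,u$ --- at which point $f\bigl(B(x,a_{rs}),B(u,a_{rs})\bigr)=0$ because both entries lie in $\G_{rs}$ and the second case applies, and $f\bigl(B(\sum_{r<n}a_{rr},x),\cdot\bigr)=0$ by the third case. Each stage supplies exactly the vanishing of the ``noise'' terms needed at the next stage. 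Without this bootstrapping (or an equivalent device), your outline cannot get off the ground; the subsequent uses of (ii)--(vi) that you sketch are broadly in the right spirit, but they all presuppose identities on $w$ that you have no means of producing for arbitrary $x,y$.
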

\begin{proof} Following the ideas of Ferreira in \cite{Ferreira} we divide the proof into the four cases. Then, let us consider arbitrary elements $x_{kl}, u_{kl}, a_{kl} \in \G_{kl}$ $( k, l \in \left\{1, \ldots, n\right\})$.\\

\noindent \textit{First case.} In this first case the reader should keep in mind that we want to show $$f \big(\sum _{1\leq i< n} x_{ii}, \sum _{1\leq j\neq k\leq n} x_{jk}\big)=0.$$
From the hypotheses of the theorem, we have
\begin{eqnarray*}
B\left(f \big(\sum _{1\leq i< n} x_{ii}, \linebreak \sum _{1\leq j\neq k\leq n} x_{jk}\big),a_{nn}\right)
&=& f\big(B\left(\sum _{1\leq i< n} x_{ii}, a_{nn}\right), B\left(\sum _{1\leq j\neq k\leq n} x_{jk}, a_{nn}\right)\big) \\
&=&  f\big(0, B\left(\sum _{1\leq j\neq k\leq n} x_{jk}, a_{nn}\right)\big) \\&=& 0.
\end{eqnarray*}
Now by condition $(i)$, this implies that
$$ B\left(\sum _{1\leq p, q\leq n} f (\sum _{1\leq i< n} x_{ii}, \sum _{1\leq j\neq k\leq n} x_{jk})_{pq}, a_{nn}\right)=0.$$
Since $$\displaystyle B\left(\sum _{1\leq p< n}f(\sum _{1\leq i< n} x_{ii}, \sum _{1\leq j\neq k\leq n} x_{jk})_{pp}, a_{nn}\right) = 0,$$
$$\displaystyle B\left(\sum _{1\leq p\neq q\leq n} \linebreak f (\sum _{1\leq i< n} x_{ii}, \sum _{1\leq j\neq k\leq n} x_{jk})_{pq}, a_{nn}\right)\in \bigoplus _{1\leq p\neq q\leq n}\G_{pq}$$
 and
$$  B\left(f (\sum _{1\leq i< n} x_{ii},\linebreak \sum _{1\leq j\neq k\leq n} x_{jk})_{nn}, a_{nn}\right) \in \G_{nn},$$
then
$$ \sum _{1\leq p\neq q\leq n}f(\sum _{1\leq i< n} x_{ii}, \sum _{1\leq j\neq k\leq n} x_{jk})_{pq} = 0\mbox{~by~ condition ~(ii)~}.$$
 Next, we have
\begin{eqnarray*}
B\left(a_{nn}, f (\sum _{1\leq i< n} x_{ii}, \sum _{1\leq j\neq k\leq n} x_{jk})\right)
&=& f\left(B(a_{nn}, \sum _{1\leq i< n} x_{ii}), B(a_{nn}, \sum _{1\leq j \neq k\leq n} x_{jk})\right)\\
&=& f\left(0, B(a_{nn}, \sum _{1\leq j\neq k\leq n} x_{jk})\right) \\&=& 0
\end{eqnarray*}
which implies
$$\sum _{1\leq p, q\leq n} B\left(a_{nn},f (\sum _{1\leq i< n} x_{ii}, \sum _{1\leq j\neq k\leq n} x_{jk})_{pq}\right)=0.$$
It follows that $$B\left(a_{nn}, \sum _{1\leq p< n}f(\sum _{1\leq i< n} x_{ii}, \sum _{1\leq j\neq k\leq n} x_{jk})_{pp}\right) = 0,$$
$$B\left(a_{nn}, \sum _{1\leq p\neq q\leq n}f (\sum _{1\leq i< n} x_{ii}, \sum _{1\leq j\neq k\leq n} x_{jk})_{pq}\right)\in \bigoplus _{1\leq p\neq q\leq n}\G_{pq}$$
and
$$B\left(a_{nn}, f (\sum _{1\leq i< n} x_{ii}, \sum _{1\leq j\neq k\leq n} x_{jk})_{nn}\right) \in \G_{nn}.$$
Hence,
$$B\left(a_{nn}, f (\sum _{1\leq i< n} x_{ii},\linebreak \sum _{1\leq j\neq k\leq n} x_{jk})_{nn}\right)=0$$
which yields
$$ f (\sum _{1\leq i< n} x_{ii}, \sum _{1\leq j\neq k\leq n} x_{jk})_{nn} = 0$$
by condition $(iii)$. Yet, we have
\begin{eqnarray*}
B\left(\sum _{1\leq p<n} f(\sum _{1\leq i< n} x_{ii}, \sum _{1\leq j\neq k\leq n} x_{jk})_{pp}, a_{rs}\right)
&=& B\left(f (\sum _{1\leq i< n} x_{ii}, \sum _{1\leq j\neq k\leq n} x_{jk}), a_{rs}\right) \\
&=& f\left(B(\sum _{1\leq i< n} x_{ii}, a_{rs}), B(\sum _{1\leq j\neq k\leq n} x_{jk}, a_{rs})\right)\\
&=&f\left(B(\sum _{1\leq i< n} x_{ii}, a_{rs}),0\right)\\
&=&0
\end{eqnarray*}
and
\begin{eqnarray*}
B\left(a_{rs}, \sum _{1\leq p<n} f (\sum _{1\leq i< n} x_{ii}, \sum _{1\leq j\neq k\leq n} x_{jk})_{pp}\right)
&=& B\left(a_{rs}, f (\sum _{1\leq i< n} x_{ii}, \sum _{1\leq j\neq k\leq n} x_{jk})\right) \\
&=& f\left(B(a_{rs}, \sum _{1\leq i< n} x_{ii}), B(a_{rs}, \sum _{1\leq j\neq k\leq n} x_{jk}) \right)\\
&=& f\left(B(a_{rs}, \sum _{1\leq i< n} x_{ii}), 0 \right) \\
&=& 0.
\end{eqnarray*}
It follows that $\displaystyle \sum _{1\leq p<n} f (\sum _{1\leq i< n} x_{ii}, \sum _{1\leq j\neq k\leq n} x_{jk})_{pp}+ 0 \in \Z(\G)$ and so
$$ \sum _{1\leq p<n} f (\sum _{1\leq i< n} x_{ii}, \sum _{1\leq j\neq k\leq n} x_{jk})_{pp} = 0$$
by Proposition \ref{seg}.
Consequently, we have
$ f \left(\sum _{1\leq i< n} x_{ii}, \sum _{1\leq j\neq k\leq n} x_{jk}\right)=0.$
\\

\noindent \textit{Second case. }
In the second case it must be borne in mind that we want to show $$f (\sum _{1\leq i\neq j\leq n} x_{ij},\sum _{1\leq k\neq l\leq n} y_{kl})=0.$$
From the hypotheses of the theorem ,we have
\begin{eqnarray*}
B\left(\sum _{1\leq p, q\leq n} f (\sum _{1\leq i\neq j\leq n} x_{ij}, \sum _{1\leq k\neq l\leq n} y_{kl})_{pq}, a_{rs}\right)
&=&B\left(f (\sum _{1\leq i\neq j\leq n} x_{ij}, \sum _{1\leq k\neq l\leq n} y_{kl}), a_{rs}\right)\\
&=& f \left(B(\sum _{1\leq i\neq j\leq n} x_{ij}, a_{rs}), B(\sum _{1\leq k\neq l\leq n} y_{kl}, a_{rs})\right)\\
&=& f(0,0)\\
&=&0.
\end{eqnarray*}
Since $\displaystyle B\left(\sum _{1\leq p\neq q\leq n}f (\sum _{1\leq i\neq j\leq n} x_{ij}, \sum _{1\leq k\neq l\leq n} y_{kl})_{pq}, a_{rs}\right)=0$, then
$$
\centerline{$\displaystyle B\big(\sum _{1\leq p\leq n} f (\sum _{1\leq i\neq j\leq n} x_{ij}, \sum _{1\leq k\neq l\leq n} y_{kl})_{pp}, a_{rs}\big)=0$.}
$$
Smilarly, we prove that
$$ B\left(a_{rs},\sum _{1\leq p\leq n} f (\sum _{1\leq i\neq j\leq n} x_{ij}, \sum _{1\leq k\neq l\leq n} y_{kl})_{pp}\right)=0.$$
By condition $(iv),$ it follows that
\begin{eqnarray}\label{centro}
&&\sum _{1\leq p< n} f (\sum _{1\leq i\neq j\leq n} x_{ij}, \sum _{1\leq k\neq l\leq n} y_{kl})_{pp} + \left(- f(\sum _{1\leq i\neq j\leq n}x_{ij}, \sum _{1\leq k\neq l\leq n} y_{kl})_{nn}\right) \in \Z(\G).
\end{eqnarray}
 Now, we observe that
\begin{eqnarray*}
B\left(f (\sum _{1\leq i\neq j\leq n} x_{ij},\sum _{1\leq k\neq l\leq n} y_{kl}), a_{nn}\right)
&=&f\big(B\left(\sum _{1\leq i\neq j\leq n} x_{ij},a_{nn}\right),B\left(\sum _{1\leq k\neq l\leq n} y_{kl},a_{nn}\right)\big)\\
&=&f\big(\sum _{1\leq i\neq j\leq n} B(x_{ij},a_{nn}),\sum _{1\leq k\neq l\leq n}B( y_{kl},a_{nn})\big).
\end{eqnarray*}
With (\ref{centro}), this implies that
$$ \sum _{1\leq p< n} B\left(f (\sum _{1\leq i\neq j\leq n}\linebreak x_{ij}, \sum _{1\leq k\neq l\leq n} y_{kl}),a_{nn}\right)_{pp}+\big(- B\left(f (\sum _{1\leq i\neq j\leq n} x_{ij},\sum _{1\leq k\neq l\leq n} y_{kl}),a_{nn}\right)_{nn}\big)\in \Z(\G).$$
Since $\displaystyle B\left(f (\sum _{1\leq i\neq j\leq n} x_{ij}, \sum _{1\leq k\neq l\leq n} y_{kl}), a_{nn}\right)\in \bigoplus _{1\leq p\neq q\leq n} \G_{pq}\bigoplus \G_{nn}$ then\\
 $\displaystyle \sum _{1\leq p< n} B\left(f (\sum _{1\leq i\neq j\leq n} x_{ij}, \sum _{1\leq k\neq l\leq n}y_{kl}),a_{nn}\right)_{pp}=0$ which results in
$$\displaystyle B\left(f (\sum _{1\leq i\neq j\leq n} x_{ij}, \sum _{1\leq k\neq l\leq n} y_{kl}),a_{nn}\right)_{nn}=0 \mbox{~by ~Proposition ~\ref{seg}}.$$
 Hence $\displaystyle B\left(f (\sum _{1\leq i\neq j\leq n} x_{ij}, \sum _{1\leq k\neq l\leq n} y_{kl}),a_{nn}\right)\in \bigoplus _{1\leq p\neq q\leq n} \G_{pq}$. It follows that
\begin{eqnarray*}
\lefteqn{B\left(a_{rr}, B\left(f (\sum _{1\leq i\neq j\leq n} x_{ij}, \sum _{1\leq k\neq l\leq n} y_{kl}), a_{nn}\right)\right)}\\
&=&B\left(a_{rr},f \big(B\left(\sum _{1\leq i\neq j\leq n} x_{ij}, a_{nn}\right), B\left(\sum _{1\leq k\neq l\leq n} y_{kl}, a_{nn}\right)\big)\right) \\
&=&f \big(B\left(a_{rr}, B\left(\sum _{1\leq i\neq j\leq n} x_{ij}, a_{nn}\right)\right), B\left(a_{rr}, B\left(\sum _{1\leq k\neq l\leq n} y_{kl}, a_{nn}\right)\right)\big) \\
&=&f \big(B\left(a_{rr},  B\left(\sum _{1\leq i\neq j\leq n} x_{ij}, a_{nn}\right)\right), B\left(B\left(a_{rr}, \sum _{1\leq k\neq l\leq n} y_{kl}\right), a_{nn}\right)\big) \\
&=&f\big(B\left(a_{rr}, a_{nn} +  B\left(\sum _{1\leq i\neq j\leq n} x_{ij}, a_{nn}\right)\right), B\left(B\left(a_{rr}, \sum _{1\leq k\neq l\leq n} y_{kl}\right), a_{nn} + B\left(\sum _{1\leq i\neq j\leq n} x_{ij}, a_{nn}\right)\right)\big)\\
&=&B\left(f\big(a_{rr}, \linebreak B\left(a_{rr}, \sum _{1\leq k\neq l\leq n} y_{kl}\right)\big), a_{nn}
 + B\left(\sum _{1\leq i\neq j\leq n} x_{ij}, a_{nn}\right)\right)  \\
 &=& B\left(0,  a_{nn} +B\left(\sum _{1\leq i\neq j\leq n} x_{ij},  a_{nn}\right)\right)\\
 &=&0
\end{eqnarray*}
by first case, for all $1\leq r<n$.
\par
So $\displaystyle B\left(f (\sum _{1\leq i\neq j\leq n}  x_{ij}, \sum _{1\leq k\neq l\leq n} y_{kl}), a_{nn}\right)= 0$, by condition $(ii)$. It follows that
\begin{eqnarray*}
\lefteqn{\sum _{1\leq p\leq n} B\left(f (\sum _{1\leq i\neq j\leq n} x_{ij}, \sum _{1\leq k\neq l\leq n} y_{kl})_{pp}, a_{nn}\right)}\\
&&+\sum _{1\leq p\neq q\leq n}  B\left(f (\sum _{1\leq i\neq j\leq n} x_{ij}, \sum _{1\leq k\neq l\leq n} y_{kl})_{pq}, a_{nn}\right)=0
\end{eqnarray*}
which yields $$B\left(\sum _{1\leq p\neq q\leq n}f (\sum _{1\leq i\neq j\leq n} x_{ij}, \sum _{1\leq k\neq l\leq n} y_{kl})_{pq}, a_{nn}\right)=0$$ and so
$$\sum _{1\leq p\neq q\leq n}f (\sum _{1\leq i\neq j\leq n} x_{ij}, \sum _{1\leq k\neq l\leq n} y_{kl})_{pq}=0 \mbox{ ~by~ condition ~(ii).}
$$
Hence,
\begin{eqnarray*}
B\left(a_{nn},f (\sum _{1\leq i\neq j\leq n} x_{ij}, \sum _{1\leq k\neq l\leq n} y_{kl})_{nn}\right)
&=&B\left(a_{nn},f (\sum _{1\leq i\neq j\leq n} x_{ij}, \sum _{1\leq k\neq l\leq n} y_{kl})\right)\\
&=& f\big(B\left(a_{nn},\sum _{1\leq i\neq j\leq n} x_{ij}\right),B\left(a_{nn},\sum _{1\leq k\neq l\leq n} y_{kl}\right)\big)
\end{eqnarray*}
and by (\ref{centro}) above we have
\begin{eqnarray*}
\lefteqn{\sum _{1\leq p<n} B\left(a_{nn},f (\sum _{1\leq i\neq j\leq n} x_{ij}, \sum _{1\leq k\neq l\leq n} y_{kl})_{nn}\right)_{pp}}\\
&& +\big(-B\left(a_{nn},f (\sum _{1\leq i\neq j\leq n} x_{ij}, \sum _{1\leq k\neq l\leq n} y_{kl})_{nn}\right)_{nn}\big)\in \Z(\G).
\end{eqnarray*}
Since $$B\left(a_{nn},f (\sum _{1\leq i\neq j\leq n} x_{ij}, \sum _{1\leq k\neq l\leq n} y_{kl})_{nn}\right)\in \G_{nn}$$ then we have
$$\sum _{1\leq p<n} B\left(a_{nn},f (\sum _{1\leq i\neq j\leq n} x_{ij}, \sum _{1\leq k\neq l\leq n} y_{kl})_{nn}\right)_{pp}=0$$ and so
\begin{eqnarray*}
B\left(a_{nn},f (\sum _{1\leq i\neq j\leq n} x_{ij}, \sum _{1\leq k\neq l\leq n} y_{kl})_{nn}\right)&=&B\left(a_{nn},f (\sum _{1\leq i\neq j\leq n} x_{ij},\sum _{1\leq k\neq l\leq n} y_{kl})_{nn}\right)_{nn}=0,
\end{eqnarray*}
by Proposition \ref{seg}. It follows that $\displaystyle f (\sum _{1\leq i\neq j\leq n} x_{ij}, \sum _{1\leq k\neq l\leq n} y_{kl})_{nn}=0$, by condition  $(iii)$, which implies $$\displaystyle \sum _{1\leq p< n}f(\sum _{1\leq i\neq j\leq n} x_{ij}, \sum _{1\leq k\neq l\leq n} y_{kl})_{pp}=0,$$ by (\ref{centro}). Consequently, we have
$$\displaystyle f (\sum _{1\leq i\neq j\leq n} x_{ij},\sum _{1\leq k\neq l\leq n} y_{kl})=0.$$
\\

\noindent \textit{Third case.}
Here, in the third case, we are interested in checking $$f\big(\sum _{1\leq p< n} x_{pp} + \sum  _{1\leq p\neq q\leq n} x_{pq}, \sum _{1\leq k< n} u_{kk} + \sum _{1\leq k\neq l\leq n} u_{kl}\big)=0.$$
In view of second case, we Observe that
\begin{eqnarray*}
\lefteqn{B\left(f\big(\sum _{1\leq p< n} x_{pp} + \sum  _{1\leq p\neq q\leq n} x_{pq}, \sum _{1\leq k< n} u_{kk} + \sum _{1\leq k\neq l\leq n} u_{kl}\big), a_{rs}\right)}\\
&=&f (B\left(\sum _{1\leq p< n} x_{pp} + \sum  _{1\leq p\neq q\leq n} x_{pq}, a_{rs}\right), B\left(\sum _{1\leq k< n} u_{kk} + \sum _{1\leq k\neq l\leq n} u_{kl}, a_{rs}\right))\\
&=& f\big(\sum  _{1\leq p<n}B(x_{pp}, a_{rs}),\sum  _{1\leq k<n} B(u_{kk}, a_{rs})\big)\\&=& 0.
\end{eqnarray*}
It follows that
$$\sum _{1\leq t\leq n}  B\left(f  \big(\sum _{1\leq p< n} x_{pp} +  \sum  _{1\leq p\neq q\leq n} x_{pq}, \sum _{1\leq k< n} u_{kk} +  \sum _{1\leq k\neq l\leq n} u_{kl}\big)_{tt}, a_{rs}\right)= 0.$$
Similarly, we have
$$\sum _{1\leq t\leq n} B\left(a_{rs},f\big( \sum _{1\leq p< n} x_{pp} + \sum  _{1\leq p\neq q\leq n} x_{pq}, \sum _{1\leq k< n} u_{kk} + \sum _{1\leq k\neq l\leq n} u_{kl}\big)_{tt}\right)= 0.$$
It follows that
\begin{eqnarray*}
\lefteqn{\sum _{1\leq t< n} f\big(\sum _{1\leq p< n} x_{pp} + \sum  _{1\leq p\neq q\leq n} x_{pq}, \sum _{1\leq k< n} u_{kk} + \sum _{1\leq k\neq l\leq n} u_{kl}\big)_{tt}}\\
&&+ \big(- f \big(\sum _{1\leq p< n} x_{pp} + \sum  _{1\leq p\neq q\leq n} x_{pq}, \sum _{1\leq k< n} u_{kk} + \sum _{1\leq k\neq l\leq n} u_{kl}\big)_{nn}\in \Z(\G)
\end{eqnarray*}
by condition $(iv)$. But
\begin{eqnarray*}
\lefteqn{B\left(f\big(\sum _{1\leq p< n} x_{pp} + \sum  _{1\leq p\neq q\leq n} x_{pq}, \sum _{1\leq k< n} u_{kk} + \sum _{1\leq k\neq l\leq n} u_{kl}\big), a_{nn}\right)}\\
&=&f \big(B\left(\sum _{1\leq p< n} x_{pp} + \sum  _{1\leq p\neq q\leq n} x_{pq}, a_{nn}\right), B\left(\sum _{1\leq k< n} u_{kk} + \sum _{1\leq k\neq l\leq n} u_{kl}, a_{nn}\right)\big)\\
&=&f \big(B\left(\sum  _{1\leq p\neq q\leq n} x_{pq}, a_{nn}\right), B\left(\sum _{1\leq k\neq l\leq n} u_{kl}, a_{nn}\right)\big)\\
&=&f \big(\sum_{1\leq p\neq q\leq n} B\left(x_{pq}, a_{nn}\right), \sum _{1\leq k\neq l\leq n} B\left(u_{kl}, a_{nn}\right)\big)\\&=&0
\end{eqnarray*}
by second case. As a  result, we have
$$\displaystyle \sum _{1\leq r\neq s\leq n}f\big(\sum _{1\leq p< n} x_{pp} + \sum  _{1\leq p\neq q\leq n} x_{pq}, \sum _{1\leq k< n} u_{kk} + \sum _{1\leq k\neq l\leq n} u_{kl}\big)_{rs}=0 \mbox{~by~ condition ~(ii).}$$
Hence from the second case
\begin{eqnarray*}
\lefteqn{B\left(a_{nn},f\big(\sum _{1\leq p< n} x_{pp} + \sum  _{1\leq p\neq q\leq n} x_{pq}, \sum _{1\leq k< n} u_{kk} + \sum _{1\leq k\neq l\leq n} u_{kl}\big)\right)}\\
&=&f \big(B\left(a_{nn},\sum _{1\leq p< n} x_{pp} + \sum  _{1\leq p\neq q\leq n} x_{pq}\right), B\left(a_{nn},\sum _{1\leq k< n} u_{kk} + \sum _{1\leq k\neq l\leq n} u_{kl}\right)\big)\\
&=&f \big(B\left(a_{nn},\sum  _{1\leq p\neq q\leq n} x_{pq}\right), B\left(a_{nn},\sum _{1\leq k\neq l\leq n} u_{kl}\right)\big)\\
&=&f \big(\sum  _{1\leq p\neq q\leq n} B\big(a_{nn},x_{pq}\big), \sum _{1\leq k\neq l\leq n} B\big(a_{nn},u_{kl}\big)\big)\\&=&0.
\end{eqnarray*}
This implies
$$\displaystyle B\left(a_{nn},f\big(\sum _{1\leq p< n} x_{pp} + \sum  _{1\leq p\neq q\leq n} x_{pq}, \sum _{1\leq k< n} u_{kk} + \sum _{1\leq k\neq l\leq n} u_{kl}\big)_{nn}\right)=0.$$
Thus
$$\displaystyle f\big(\sum _{1\leq p< n} x_{pp} + \sum  _{1\leq p\neq q\leq n} x_{pq}, \sum _{1\leq k< n} u_{kk} + \sum _{1\leq k\neq l\leq n} u_{kl}\big)_{nn}=0$$
implying
$$\displaystyle \sum _{1\leq t< n} f\big(\sum _{1\leq p< n} x_{pp} + \sum  _{1\leq p\neq q\leq n} x_{pq}, \sum _{1\leq k< n} u_{kk} + \sum _{1\leq k\neq l\leq n} u_{kl}\big)_{tt}=0$$
by condition $(iii)$.
Therefore,
$$\displaystyle f\big(\sum _{1\leq p< n} x_{pp} + \sum  _{1\leq p\neq q\leq n} x_{pq}, \sum _{1\leq k< n} u_{kk} + \sum _{1\leq k\neq l\leq n} u_{kl}\big)=0.$$\\

\noindent \textit{Fourth case.}
Finally in the last case we show that $f = 0$.\\
Since $\displaystyle B\left(\sum_{1 \leq p , q \leq n}x_{pq}, y_{rs}\right) \subseteq \G_{rs}$ we have
$ B(f (x, u), a_{rs}) = f (B(x, a_{rs}), B(u, a_{rs})) = 0.$
 Then by second case, we obtain
$$\displaystyle B\left(\sum_{1\leq p \leq n}f (x, u)_{pp} , a_{rs}\right) = 0.$$
Similarly, we have
$$\displaystyle B\left(a_{rs}, \sum_{1\leq p \leq n}f (x, u)_{pp}\right) = 0.$$
It follows from condition $(iv)$ that
$\displaystyle \sum_{1\leq p< n}f (x, u)_{pp}+(-f(x,u)_{nn}) \in \Z(\G)$.
\par Now as $\displaystyle B\left(\sum_{1 \leq r< n}y_{rr}, y\right) \subseteq \sum_{1 \leq r< n}\G_{rr}+\sum_{1 \leq r\neq s \leq n}\G_{rs}$ then  by third case, we have
$$\displaystyle B\left(\sum_{1 \leq r< n}a_{rr},f (x, u)\right) = f \big(B\left(\sum_{1 \leq r< n}a_{rr},x\right), B\left(\sum_{1 \leq r< n}a_{rr},u\right)\big) = 0.$$
 It follows that
$\displaystyle B\left(\sum_{1 \leq r< n}a_{rr},\sum_{1 \leq r< n}f (x, u)_{rr}+\sum_{1 \leq r\neq s\leq n}f(x,u)_{rs}\right)=0$ implying
\begin{enumerate}
\item[ (1)] $\displaystyle B\left(\sum_{1 \leq r< n}a_{rr},\sum_{1 \leq r< n}f (x, u)_{rr}\right)=0$,
\item[ (2)] $\displaystyle B\left(\sum_{1 \leq r< n}a_{rr},\sum_{1 \leq r\neq s\leq n}f(x,u)_{rs}\right)=0$.
\end{enumerate}
By identity $(1)$ above we have
$\displaystyle \sum_{1 \leq r< n}B\big(a_{rr},f (x, u)_{rr}\big)=0$ resulting $B\big(a_{rr},f (x, u)_{rr}\big)=0$ for all $1 \leq r< n$. We deduce
\begin{eqnarray*}
0&=&B\big(a_{rr},f (x, u)_{rr}\big)a_{rn}a_{nn}\\
&=&B\big(f (x, u)_{rr},a_{rr}\big)a_{rn}a_{nn}\\
&=&a_{rr}a_{rn}B\big(-f(x, u)_{nn},a_{nn}\big)\\
&=&a_{rr}a_{rn}B\big(a_{nn},-f(x, u)_{nn}\big)
\end{eqnarray*}
for all $r<n$, by condition $(v)$. It follows that $B\big(a_{nn},f(x, u)_{nn}\big)=0$ which implies $f(x, u)_{nn}=0$, by condition $(iii)$. Thus, we have $\displaystyle \sum_{1\leq p< n}f (x, u)_{pp}=0$. Now, by identity $(2)$, we have $\displaystyle \sum_{1 \leq r\neq s\leq n}f(x,u)_{rs}=0$ by condition $(ii)$. Hence, we conclude that $f=0$.
\end{proof}
\begin{cor}\label{util}
Let $\G$ be a generalized $n-$matrix ring such that
\begin{enumerate}
\item[\it (i)] For $a_{ii} \in \R_i$, if $a_{ii}\R_i$ = 0, then $a_{ii} = 0$;
\item[\it (ii)] For $b_{jj} \in \R_j,$ if $\R_j b_{jj} = 0,$ then $b_{jj} = 0,$
\end{enumerate}
where $1 \leq i \neq j\leq n.$
Let $k$ be a positive integer. If a map $f : \G \times \G \longrightarrow \G$ satisfies
\begin{enumerate}
\item[\it (i)] $f (\G, 0) = f (0, \G) = 0;$
\item[\it (ii)] $f (x, y)z_1z_2 \cdots z_k = f (xz_1z_2 \cdots z_k, yz_1z_2 \cdots z_k);$
\item[\it (iii)] $z_1z_2 \cdots z_kf (x, y) = f (z_1z_2 \cdots z_k x, z_1z_2 \cdots z_k y),$
\end{enumerate}
for all $x, y, z_1, z_2, \cdots , z_k \in \G,$ then $f = 0.$
\end{cor}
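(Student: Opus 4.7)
The plan is to deduce the corollary from Theorem \ref{t11} applied to the biadditive map $B : \G \times \G \to \G$ given by ring multiplication, $B(x, y) := xy$. The main step is to promote the $k$-fold hypotheses (ii), (iii) of the corollary to their single-variable counterparts $f(x,y)\, z = f(xz, yz)$ and $z\, f(x,y) = f(zx, zy)$, valid for every $z \in \G$; these are exactly the identities that Theorem \ref{t11} requires in its conditions (viii), (ix) for this choice of $B$.

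For the first identity, fix $x, y, z$ and set $g := f(x, y) z - f(xz, yz)$. Applying the corollary's hypothesis (ii) to the pair $(x, y)$ with the $k$ factors $(z a_1, a_2, \ldots, a_k)$ gives $f(x, y)\, z a_1 \cdots a_k = f(x z a_1 \cdots a_k, y z a_1 \cdots a_k)$, while applying the same hypothesis to $(xz, yz)$ with the $k$ factors $(a_1, a_2, \ldots, a_k)$ gives $f(xz, yz)\, a_1 \cdots a_k = f(xz a_1 \cdots a_k, yz a_1 \cdots a_k)$. Subtracting yields $g \cdot a_1 \cdots a_k = 0$ for arbitrary $a_1, \ldots, a_k \in \G$. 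The corollary's structural hypotheses are exactly those required by Proposition \ref{ter}; holding $a_1, \ldots, a_{k-1}$ fixed and varying $a_k$ gives $(g a_1 \cdots a_{k-1}) \G = 0$, so Proposition \ref{ter} forces $g a_1 \cdots a_{k-1} = 0$. Iterating strips off the remaining factors and leaves $g = 0$. An analogous argument based on hypothesis (iii) yields $z\, f(x, y) = f(zx, zy)$.

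With these single-variable identities in hand, condition (vii) of Theorem \ref{t11} is exactly the corollary's (i), and (viii), (ix) follow at once. Conditions (i)-(vi) for $B(x,y) = xy$ are read off from the Peirce decomposition $\G = \bigoplus \G_{ij}$ of Definition \ref{pri}, Proposition \ref{seg} (together with the ring isomorphisms $\tau^n_p$ that govern (iv) and (v)), associativity of the ring product (for (vi)), and Proposition \ref{ter} for the non-degeneracy clause (iii). Theorem \ref{t11} then gives $f = 0$. The delicate step is the bootstrapping, whose key observation is that $f(x, y) z$ and $f(xz, yz)$ coincide after right-multiplication by any $k$-fold product $a_1 \cdots a_k$; everything else is an iterated application of the non-degeneracy furnished by Proposition \ref{ter}.
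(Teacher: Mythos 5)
Your proposal follows the paper's proof essentially verbatim: the same substitution $z_1 \mapsto z z_1$ reduces the $k$-fold hypotheses to $\big(f(x,y)z - f(xz,yz)\big)\G^k = 0$, after which Proposition \ref{ter} (iterated) yields the single-variable identities $f(x,y)z=f(xz,yz)$ and $zf(x,y)=f(zx,zy)$, and the conclusion is obtained by applying Theorem \ref{t11} to $B(x,y)=xy$. The only difference is cosmetic: you spell out the $k$-fold iteration of Proposition \ref{ter} that the paper leaves implicit, while, like the paper, you leave the verification of conditions (i)--(vi) of Theorem \ref{t11} for $B(x,y)=xy$ as an asserted check rather than a detailed one.
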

\begin{proof} We first claim that $f (x, y)z = f (xz, yz)$ and $zf (x, y) = f (zx, zy)$ for all $x, y, z \in \G.$
Indeed, since
$$f (x, y)(zz_1)z_2 \cdots z_k = f (xzz_1z_2 \cdots z_k, yzz_1z_2 \cdots z_k) = f (xz, yz)z_1z_2 \cdots z_k,$$
that is, $(f (x, y)z - f (xz, yz))\G^k = 0.$ Hence $f (x, y)z = f (xz, yz)$ by Proposition \ref{ter}. Analogously,
$zf (x, y) = f (zx, zy).$
Define $B : \G \times \G \longrightarrow \G$ by
$B(x, y) = xy.$
It is easy to check that $B$ and $f$ satisfy the all conditions of Theorem \ref{t11}. Hence $f = 0.$
\end{proof}
\section{Applications}

\begin{thm}
Let $\G$ be a generalized $n-$matrix ring such that
\begin{enumerate}
\item[\it (i)] For $a_{ii} \in \R_i$, if $a_{ii}\R_i$ = 0, then $a_{ii} = 0$;
\item[\it (ii)] For $b_{jj} \in \R_j,$ if $\R_j b_{jj} = 0,$ then $b_{jj} = 0,$
\end{enumerate}
where $1 \leq i \neq j\leq n.$
Then every $m-$multiplicative isomorphism from $\G$ onto a ring $\R$ is additive.
\end{thm}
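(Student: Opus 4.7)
\medskip

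\noindent\textbf{Proof proposal.} The plan is to reduce the statement to Corollary \ref{util} applied with $k = m-1$. Let $\varphi : \G \longrightarrow \R$ be an $m$-multiplicative isomorphism and define
\begin{equation*}
f : \G \times \G \longrightarrow \G, \qquad f(x,y) \,=\, \varphi^{-1}\!\bigl(\varphi(x) + \varphi(y)\bigr) - x - y.
\end{equation*}
Showing $\varphi$ additive is the same as showing $f \equiv 0$, so the goal becomes verifying that $f$ meets the three hypotheses of Corollary \ref{util}.

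First I would establish $\varphi(0_{\G}) = 0_{\R}$, which is a bit subtle because one cannot use $\varphi(0) = \varphi(0\cdots 0) = \varphi(0)^m$ alone. Instead, use surjectivity to pick $a \in \G$ with $\varphi(a)=0_{\R}$; then
$\varphi(0_{\G}) = \varphi(a\cdot 0_{\G}\cdots 0_{\G}) = \varphi(a)\varphi(0_{\G})^{m-1} = 0_{\R}$. From this, condition (i) of Corollary \ref{util} is immediate: $f(x,0) = \varphi^{-1}(\varphi(x)) - x = 0$ and likewise $f(0,y)=0$.

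The heart of the argument is conditions (ii) and (iii) of Corollary \ref{util}. Fix $z_1,\dots,z_{m-1} \in \G$ and set
\begin{align*}
w_1 &= \varphi^{-1}\!\bigl(\varphi(x)+\varphi(y)\bigr)\, z_1 \cdots z_{m-1}, \\
w_2 &= x\, z_1 \cdots z_{m-1}, \\
w_3 &= y\, z_1 \cdots z_{m-1}.
\end{align*}
Applying the $m$-multiplicativity of $\varphi$ to each of $w_1, w_2, w_3$ and writing $P = \varphi(z_1)\cdots\varphi(z_{m-1})$, one gets $\varphi(w_1) = (\varphi(x)+\varphi(y))\,P = \varphi(w_2) + \varphi(w_3)$. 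Consequently
\begin{equation*}
f(w_2,w_3) \,=\, \varphi^{-1}(\varphi(w_2)+\varphi(w_3)) - w_2 - w_3 \,=\, w_1 - w_2 - w_3 \,=\, f(x,y)\, z_1 \cdots z_{m-1},
\end{equation*}
which is condition (ii) of Corollary \ref{util} with $k = m-1$. The symmetric computation with the $z_i$'s multiplied on the left yields condition (iii). Corollary \ref{util} then forces $f = 0$, i.e.\ $\varphi(x+y) = \varphi(x) + \varphi(y)$ for all $x,y \in \G$.

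I do not expect serious obstacles. The only mildly delicate point is proving $\varphi(0)=0$ from the $m$-multiplicative relation alone, which requires invoking surjectivity of $\varphi$ rather than just multiplying zeros together. Once $f$ is defined and $\varphi(0)=0$ is established, verifying the hypotheses of Corollary \ref{util} is a direct application of $m$-multiplicativity, and the heavy lifting is done inside Theorem \ref{t11} and Corollary \ref{util}.
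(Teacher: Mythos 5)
The proposal is correct and takes essentially the same approach as the paper: both reduce the theorem to Corollary~\ref{util} with $k=m-1$ by introducing a map $f$ measuring the additivity defect, establish $\varphi(0)=0$ via surjectivity, and verify the hypotheses by direct use of $m$-multiplicativity. The only (immaterial) difference is that the paper defines $f(x,y)=\varphi^{-1}\bigl(\varphi(x+y)-\varphi(x)-\varphi(y)\bigr)$ and invokes $m$-multiplicativity of $\varphi^{-1}$, while you measure the defect as $\varphi^{-1}\bigl(\varphi(x)+\varphi(y)\bigr)-x-y$ and compute with $\varphi$ directly; both yield the same conclusion.
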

\begin{proof} Suppose that $\varphi$ is a $m-$multiplicative isomorphism from $\G$ onto a ring $\R.$ Since $\varphi$ is onto, $\varphi(x) = 0$ for some $x \in \G.$ Then $\varphi(0) = \varphi(0 \cdots 0x) = \varphi(0) \cdots \varphi(0) \varphi(x) = \varphi(0) \cdots \varphi(0)0 = 0$ and so $\varphi^{-1}(0)
= 0.$ Let us check that the conditions of the Corollary \ref{util} are satisfied. For every $x, y \in \G$ we define $f (x, y) = \varphi^{-1}(\varphi(x + y) - \varphi(x) - \varphi(y)),
$ we see that $f (x, 0) = f (0, x) = 0$ for all $x \in \G.$ It is easy to check that $\varphi^{-1}$ is also a $m$-multiplicative
isomorphism. Thus, for any $u_1, \cdots , u_{m-1} \in \G,$ we have
\begin{eqnarray*}
f (x, y)u_1 \cdots u_{m-1}&=& \varphi^{-1}(\varphi(x + y) - \varphi(x) - \varphi(y) )
\varphi^{-1}(\varphi(u_1)) \cdots \varphi^{-1}(\varphi(u_{m-1}))\\&=& \varphi^{-1}((\varphi(x + y) - \varphi(x) - \varphi(y))\varphi(u_1) \cdots \varphi(u_{m-1})) \\&=& f (xu_1 \cdots u_{m-1}, yu_1 \cdots u_{m-1}).
\end{eqnarray*}
Similarly we have
$u_1 \cdots u_{m-1}f (x, y) = f (u_1 \cdots u_{m-1}x, u_1 \cdots u_{m-1}y)$. Therefore by Corollary \ref{util}, $f = 0.$ That is, $\varphi(x + y) = \varphi(x) + \varphi(y)$ for all $x, y \in \G.$
\end{proof}
\begin{thm}
Let $\G$ be a generalized $n-$matrix ring such that
\begin{enumerate}
\item[\it (i)] For $a_{ii} \in \R_i$, if $a_{ii}\R_i$ = 0, then $a_{ii} = 0$;
\item[\it (ii)] For $b_{jj} \in \R_j,$ if $\R_j b_{jj} = 0,$ then $b_{jj} = 0,$
\end{enumerate}
where $1 \leq i \neq j\leq n.$
Then any $m-$multiplicative derivation d of $\G$ is additive.
\end{thm}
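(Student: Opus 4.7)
The plan is to mimic the strategy used for $m$-multiplicative isomorphisms in the preceding theorem: define
\[
f : \G \times \G \longrightarrow \G, \qquad f(x, y) = d(x+y) - d(x) - d(y),
\]
and verify that $f$ satisfies the three hypotheses of Corollary~\ref{util} with $k = m-1$. The conclusion $f = 0$ is exactly the statement that $d$ is additive.

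First I would establish that $d(0) = 0$. Writing $0 = \underbrace{0 \cdots 0}_{m}$ and applying the derivation identity gives $d(0) = \sum_{i=1}^{m} 0 \cdots d(0) \cdots 0$, where in each summand at least one genuine factor of $0$ sits on the left or right of $d(0)$ (since $m \ge 2$), so every term vanishes. With this in hand, $f(x,0) = d(x) - d(x) - d(0) = 0$ and similarly $f(0,x) = 0$, which is hypothesis (i) of Corollary~\ref{util}. Next I would prove the two absorption identities
\[
f(x,y)\, z_1 z_2 \cdots z_{m-1} = f\bigl(x z_1 \cdots z_{m-1},\, y z_1 \cdots z_{m-1}\bigr),
\]
\[
z_1 z_2 \cdots z_{m-1}\, f(x,y) = f\bigl(z_1 \cdots z_{m-1}\, x,\, z_1 \cdots z_{m-1}\, y\bigr).
\]
For the first, I expand $d\bigl((x+y) z_1 \cdots z_{m-1}\bigr)$ and $d\bigl(x z_1 \cdots z_{m-1}\bigr) + d\bigl(y z_1 \cdots z_{m-1}\bigr)$ via the $m$-derivation rule. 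All the ``inner'' summands of the form $w \cdot z_1 \cdots d(z_i) \cdots z_{m-1}$ match up because $x \cdot z_1 \cdots d(z_i) \cdots z_{m-1} + y \cdot z_1 \cdots d(z_i) \cdots z_{m-1} = (x+y) \cdot z_1 \cdots d(z_i) \cdots z_{m-1}$ purely by distributivity in $\G$ (no additivity of $d$ is required). Subtracting, only the outer terms survive, and the difference collapses to $\bigl(d(x+y) - d(x) - d(y)\bigr) z_1 \cdots z_{m-1} = f(x,y)\, z_1 \cdots z_{m-1}$. The left-sided identity is symmetric.

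With hypotheses (i)--(iii) of Corollary~\ref{util} verified, the corollary forces $f \equiv 0$, i.e.\ $d(x+y) = d(x) + d(y)$ for all $x, y \in \G$. The main (and essentially only) obstacle is the bookkeeping in the absorption step: one must keep track of $m-1$ inner summands on each side and confirm that distributivity alone suffices to cancel them, with no hidden use of additivity of $d$. Once that is clear, everything reduces to a direct application of the machinery already developed in Theorem~\ref{t11} and Corollary~\ref{util}.
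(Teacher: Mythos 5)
Your proposal is correct and follows essentially the same route as the paper: the paper likewise sets $f(x,y)=d(x+y)-d(x)-d(y)$ and invokes Corollary~\ref{util}, though it omits the details you supply (that $d(0)=0$ and that the inner summands in the $m$-derivation expansion cancel by distributivity alone). Your more explicit verification of the absorption identities is a welcome filling-in of what the paper leaves to the reader.
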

\begin{proof} We define
$f (x, y) = d(x + y) - d(x) - d(y)$, for any $x, y \in \G$. Hence $f$ defined in this way satisfy the conditions of Corollary \ref{util}. Therefore $f = 0$ and so $d(x + y) = d(x) + d(y).$
\end{proof}

\end{document}